\newcommand{\T}{\mathscr{T}}
\newcommand{\On}{\mathscr{O}_n}
\newcommand{\End}{\mathrm{End}}
\newcommand{\wEnd}{\mathrm{wEnd}}
\newcommand{\sEnd}{\mathrm{sEnd}}
\newcommand{\swEnd}{\mathrm{swEnd}}
\newcommand{\Aut}{\mathrm{Aut}}
\theoremstyle{plain}
\newtheorem{theorem}{Theorem}[section]
\newtheorem{proposition}[theorem]{Proposition}
\newtheorem{lemma}[theorem]{Lemma}
\newtheorem{corollary}[theorem]{Corollary}
\def\im{\mathop{\mathrm{Im}}\nolimits} 
\def\Ker{\mathop{\mathrm{Ker}}\nolimits} 
\def\rank{\mathop{\mathrm{rank}}\nolimits} 
\def\N{\mathbb N}
\begin{document}

\title{On the Rank of Monoids of Endomorphisms of a Finite Directed Path\footnote{
This work is funded by national funds through the FCT - Funda\c c\~ao para a Ci\^encia e a Tecnologia, I.P., under the scope of the project UIDB/00297/2020 (Center for Mathematics and Applications).}}

\author{V.H. Fernandes and T. Paulista}

\maketitle

\renewcommand{\thefootnote}{}

\footnote{2010 \emph{Mathematics Subject Classification}: 05C38, 20M10, 20M20, 05C25}

\footnote{\emph{Keywords}: graph endomorphisms, paths, generators, rank.}

\renewcommand{\thefootnote}{\arabic{footnote}}
\setcounter{footnote}{0}

\begin{abstract} 
In this paper we consider endomorphisms of a finite directed path from monoid generators perspective. Our main aim is to determine the rank of the monoid 
$\wEnd\vec{P}_n$ of all weak endomorphisms of a directed path with $n$ vertices, which is 
a submonoid of the widely studied monoid $\On$ of all order-preserving transformations of a $n$-chain. 
Also, we describe the regular elements of $\wEnd\vec{P}_n$ and calculate its size and number of idempotents. 
\end{abstract}

\section*{Introduction and Preliminaries} 

Endomorphisms of graphs allow to establish natural connections between Graph Theory and Semigroup Theory 
in the same way that automorphisms of graphs allow it between Graph Theory and Group Theory.  
This fact has led many authors to have studied during the last decades various algebraic and combinatorial properties of monoids of endomorphisms of graphs. In particular, regularity, in the sense of Semigroup Theory, is one of the most studied properties. A general solution to the problem, posed in 1987 by Knauer and Wilkeit, see \cite{Marki:1988}, of which graphs have a regular monoid of endomorphisms seems to be very difficult to obtain. Nevertheless, for some special classes of graphs, various authors studied and solved this question (for instance, see 
\cite{
Fan:1993,
Fan:1997,
Hou&Gu:2016,
Hou&Gu&Shang:2014,
Hou&Luo&Fan:2012,
Hou&Song&Gu:2017,
Knauer&Wanichsombat:2014,
Li:2006,
Li&Chen:2001,
Wilkeit:1996}). 

\smallskip 

A very important invariant of a semigroup (monoid or group), which has been object of intensive research in Semigroup Theory, 
is the notion of \textit{rank}, i.e. the least number of generators of a semigroup (monoid or group) $S$, which we denote by $\rank(S)$.

\smallskip 

Let $\Omega$ be a finite set
with at least $3$ elements.
It is well-known that the symmetric group  of $\Omega$
has rank $2$ (as a semigroup, a monoid or a group) and the monoid of all (full) transformations
and the monoid of all partial transformations of $\Omega$ have
ranks $3$ and $4$, respectively.
The survey \cite{Fernandes:2002} presents 
these results and similar ones for other classes of transformation monoids,
in particular, for monoids of order-preserving transformations and
for some of their extensions. 
For example, the rank of the extensively studied monoid of all order-preserving transformations of a $n$-chain is $n$. This result was proved by Gomes and Howie \cite{Gomes&Howie:1992} in 1992. 
More recently, for instance, the papers 
\cite{
Araujo&al:2015,
Fernandes&al:2014,
Fernandes&al:2018ip,
Fernandes&Quinteiro:2014,
Fernandes&Sanwong:2014,
Zhao&Fernandes:2015} 
are dedicated to the computation of the ranks of certain classes of transformation semigroups or monoids.

\smallskip 

For a finite set $\Omega$, denote by $\T(\Omega)$ the monoid of all (full) transformations of $\Omega$. 

Let $G=(V,E)$ be an undirected [respectively, a directed] graph, without loops and without multiple edges. 
Let $\alpha$ be an element of $\T(V)$. We say that the transformation $\alpha$ is: 
\begin{itemize}
\item an \textit{endomorphism} of $G$ if $\{u,v\}\in E$ implies  $\{u\alpha,v\alpha\}\in E$
[respectively, $(u,v)\in E$ implies  $(u\alpha,v\alpha)\in E$], for all $u,v\in V$;
\item a \textit{weak endomorphism} of $G$ if $\{u,v\}\in E$ and $u\alpha\ne v\alpha$ imply  $\{u\alpha,v\alpha\}\in E$ 
[respectively, if $(u,v)\in E$ and $u\alpha\ne v\alpha$ imply  $(u\alpha,v\alpha)\in E$], for all $u,v\in V$;
\item a \textit{strong endomorphism} of $G$ if $\{u,v\}\in E$ if and only if  $\{u\alpha,v\alpha\}\in E$ 
[respectively, $(u,v)\in E$ if and only if  $(u\alpha,v\alpha)\in E$], for all $u,v\in V$;
\item a \textit{strong weak endomorphism} of $G$ if $\{u,v\}\in E$ and $u\alpha\ne v\alpha$ if and only if $\{u\alpha,v\alpha\}\in E$ 
[respectively, $(u,v)\in E$ and $u\alpha\ne v\alpha$ if and only if $(u\alpha,v\alpha)\in E$], for all $u,v\in V$;
\item an \textit{automorphism} of $G$ if $\alpha$ is a bijective strong endomorphism (i.e. $\alpha$ is bijective and $\alpha$ and $\alpha^{-1}$ are both endomorphisms). For finite graphs (undirected or directed), any bijective endomorphism is an automorphism. 
\end{itemize}

We denote by:
\begin{itemize}
\item $\End G$ the set of all endomorphisms of $G$;
\item $\wEnd G$ the set of all weak endomorphisms of $G$;
\item $\sEnd G$ the set of all strong endomorphisms of $G$;
\item $\swEnd G$ the set of all strong weak endomorphisms of $G$;
\item $\Aut G$ the set of all automorphisms of $G$. 
\end{itemize}

It is clear that $\End G$, $\wEnd G$, $\sEnd G$, $\swEnd G$ and $\Aut G$ are monoids under composition of maps, 
i.e. they are submonoids of $\T(V)$. Moreover, $\Aut G$ is also a group.  Clearly, 
$$
\Aut G\subseteq\sEnd G\subseteq\End G, \swEnd G \subseteq\wEnd G
$$ 
\begin{center}
\begin{picture}(55,140)(0,10)
\put(40,60){\line(-1,1){40}}
\put(40,60){\line(1,1){40}}
\put(80,100){\line(-1,1){40}}
\put(0,100){\line(1,1){40}}
\put(40,20){\line(0,1){40}}
\put(37,16.5){$\bullet$}\put(45,16.5){$\Aut G$}
\put(37,137.5){$\bullet$}\put(45,140){$\wEnd G$}
\put(77,98){$\bullet$}\put(84,100){$\End G$}
\put(-2,98){$\bullet$}\put(-45,100){$\swEnd G$}
\put(37.5,58){$\bullet$}\put(45,55){$\sEnd G$}
\end{picture}
\end{center}
(these inclusions may not be strict).

\medskip 

Let $\N$ be the set of all natural numbers greater than zero and let $n \in \N$. 

Let $P_n$ be an undirected path with $n$ vertices, e.g. 
$
P_n=\left(\{1,\ldots,n\},\{\{i,i+1\}\mid i=1,\ldots,n-1\}\right). 
$
The number of endomorphisms of $P_n$ has been determined by Arworn 
\cite{Arworn:2009} (see also the paper \cite{Michels&Knauer:2009} by Michels and Knauer). 
In addition, several other combinatorial and algebraic properties of $P_n$ were also studied in these two papers and, 
for instance, in \cite{Arworn&Knauer&Leeratanavalee:2008,Hou&Luo&Cheng:2008}. 
More recently, in \cite{Dimitrova&Fernandes&Koppitz&Quinteiro:2020}, the first author et al., for $n\geqslant 2$, showed that: 
\begin{itemize}
\item $\rank(\wEnd P_n)=n+\sum_{j=1}^{\lfloor\frac{n-3}{3}\rfloor}\lfloor\frac{n-3j-1}{2}\rfloor$;
\item $\rank(\swEnd P_n)=\lceil\frac{n}{2}\rceil+1$; 
\item $\rank(\End P_n)=1+\lfloor\frac{n-1}{2}\rfloor+\sum_{j=1}^{\lfloor\frac{n-3}{3}\rfloor}\lfloor\frac{n-3j-1}{2}\rfloor$; 
\item $\rank(\sEnd P_n)=1$ for $n\ne 3$, $\rank(\sEnd P_3)=3$; 
\item $\rank(\Aut P_n)=1$. 
\end{itemize}

\medskip 

In this paper, we consider the directed path with $n$ vertices 
$$
\vec{P}_n=\left(\{1,\ldots,n\},\{(i,i+1)\mid i=1,\ldots,n-1\}\right) 
$$
and the associated monoids $\Aut\vec{P}_n$, $\sEnd\vec{P}_n$, $\End\vec{P}_n$, $\swEnd\vec{P}_n$ and $\wEnd\vec{P}_n$. 
In Section \ref{basics}, we show that $\Aut\vec{P}_n$, $\sEnd\vec{P}_n$ and $\End\vec{P}_n$ are trivial monoids and 
that $\swEnd\vec{P}_n$ is simply made up of constant maps and the identity. 
Still in this section, we present some basic properties of $\wEnd\vec{P}_n$, 
calculate its cardinal and number of idempotents and describe its regular elements. 
Our main result states that the rank of the monoid $\wEnd\vec{P}_n$ is $n-1$ and is proved in Section \ref{rank}. 

\medskip 

For general background on Semigroup
Theory and standard notation, we refer the reader to Howie's book \cite{Howie:1995}. 
On the other hand, regarding Algebraic Graph Theory, our main reference is Knauer's book \cite{Knauer:2011}.  

\section{Basic Properties} \label{basics} 

Let $n\in\N$. Let $\T_n=\T(\{1,\ldots ,n\})$. 

\smallskip 

 Recall that a transformation $\alpha$ of $\{1,\ldots,n\}$ is said to be \textit{order-preserving} 
if $x\leqslant y$ implies $x\alpha\leqslant y\alpha$, for all $x,y\in\{1,\ldots ,n\}$. 
Denote by $\On$ the submonoid of $\T_n$ of all order-preserving transformations.   

\smallskip 

First, we show that $\wEnd\vec{P_n}$ is a submonoid of $\On$ and so 
that will also be the case of all other monoids of different type of endomorphisms of $\vec{P_n}$. 

\begin{proposition} \label{order-preserving}
One has $\wEnd\vec P_n\subseteq\On$. 
\end{proposition}

\begin{proof} 
Let $\alpha \in\wEnd\vec{P}_{n}$. 

Let $i,j \in \{1,\ldots ,n\}$ be such that $i \leqslant j$. Then $j=i+k$, for some $k \geqslant 0$.

If $k=0$ then $i=j$ and, obviously, $i \alpha \leqslant j \alpha$.

If $k=1$ then $(i, j)$ is an edge of $\vec{P}_{n}$. Obviously, if $i \alpha = j \alpha$ then $i \alpha \leqslant j \alpha$. So suppose that $i \alpha \neq j \alpha$. Since $\alpha \in\wEnd\vec{P}_{n}$, we have that $(i \alpha, j \alpha)$ is an edge of $\vec{P}_{n}$, whence $j \alpha = i \alpha + 1$ and so $i \alpha \leqslant j \alpha$.

Now, suppose $k>1$. By the previous case, we have 
$i \alpha \leqslant (i+1) \alpha, (i+1) \alpha \leqslant (i+2) \alpha,\ldots, (i+k-1) \alpha \leqslant (i+k) \alpha = j \alpha$. 
Hence, $i \alpha \leqslant j \alpha$, as required.
\end{proof}  

Let $\alpha\in\T_n$. We denote by $\im(\alpha)$ the \textit{image} of $\alpha$, i.e. $\im(\alpha)=\{x\alpha\mid x\in \{1,\ldots,n\}\}$. 

\begin{lemma} \label{interval}
Let $\alpha \in\wEnd\vec{P}_{n}$. Then $\im(\alpha)$ is an interval of $\{1,\ldots ,n\}$.
\end{lemma}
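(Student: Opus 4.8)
The plan is to exploit the rigid local structure that the weak-endomorphism condition imposes on consecutive vertices, and then to observe that an order-preserving map whose consecutive values increase by at most one must have an interval as its image.

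First I would fix $\alpha\in\wEnd\vec{P}_n$ and examine each edge $(i,i+1)$ of $\vec{P}_n$, for $i\in\{1,\ldots,n-1\}$. By definition of a weak endomorphism, either $i\alpha=(i+1)\alpha$, or $i\alpha\neq(i+1)\alpha$ and then $(i\alpha,(i+1)\alpha)$ must itself be an edge of $\vec{P}_n$, forcing $(i+1)\alpha=i\alpha+1$. In either case $(i+1)\alpha-i\alpha\in\{0,1\}$. Thus the values $1\alpha,2\alpha,\ldots,n\alpha$, read in order, form a sequence in which each term exceeds its predecessor by $0$ or $1$; this is of course consistent with $\alpha\in\On$ by Proposition \ref{order-preserving}.

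Next I would conclude that the attained values leave no gap. Writing $a=1\alpha$ and $b=n\alpha$, the order-preserving property gives $\im(\alpha)\subseteq\{a,a+1,\ldots,b\}$, so it suffices to prove the reverse inclusion. The values $a=1\alpha$ and $b=n\alpha$ clearly lie in $\im(\alpha)$, so let $c$ satisfy $a<c<b$. Taking $i$ to be the largest index with $i\alpha\leqslant c$, the set of such indices is nonempty (it contains $1$, since $1\alpha=a<c$) and excludes $n$ (since $n\alpha=b>c$), so $1\leqslant i<n$ and $(i+1)\alpha>c$. Because the step $(i+1)\alpha-i\alpha$ is at most $1$, this forces $i\alpha=c$, whence $c\in\im(\alpha)$. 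Therefore $\im(\alpha)=\{a,a+1,\ldots,b\}$ is an interval of $\{1,\ldots,n\}$.

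This argument has essentially no obstacle: its whole content is the observation that the edge condition leaves only the two possibilities $(i+1)\alpha=i\alpha$ and $(i+1)\alpha=i\alpha+1$. The sole point requiring a little care is the passage from ``consecutive differences lie in $\{0,1\}$'' to ``the image is an interval'', which is handled by the maximal-index argument above; alternatively one could give a one-line induction on $i$ showing that $\{1\alpha,\ldots,i\alpha\}$ is an interval at every stage.
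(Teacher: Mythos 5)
Your proof is correct and rests on the same mechanism as the paper's: the weak-endomorphism condition forces $(i+1)\alpha - i\alpha \in \{0,1\}$, and a maximal-index argument then shows any value below a point of the image and above another must itself be attained. The only difference is presentational -- you argue directly that every $c$ between $1\alpha$ and $n\alpha$ lies in $\im(\alpha)$, whereas the paper assumes a gap and derives a contradiction at the maximal preimage of the gap's lower endpoint -- so this is essentially the paper's proof.
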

\begin{proof}
Let $\alpha \in\wEnd\vec{P}_{n}$.

If $|\im(\alpha)|=1$  then $\im(\alpha)$ is an interval of $\{1,\ldots ,n\}$.

Suppose that $|\im(\alpha)|>1$ and, in order to reach a contradiction, that $\im(\alpha)$ is not an interval of $\{1,\ldots ,n\}$. 
Therefore, there exists $a \in\im(\alpha)$ such that $a+1,\ldots ,a+t-1 \notin\im(\alpha)$ and $a+t \in\im(\alpha)$, 
for some $t \geqslant 2$.

Let $i^{*}= \max \big \{i \in \{1,\ldots ,n\}\mid i \alpha = a \big \}$ and let $j\in \{1,\ldots ,n\}$ be such that $j\alpha=a+t$. 
Since $\alpha\in\On$ and $i^*\alpha=a<a+t=j\alpha$, we have $i^*<j$, whence $i^*<n$ and so $(i^*,i^*+1)$ is an edge of $\vec{P}_n$. 
On the other hand, by definition of $i^*$, we have $(i^*+1)\alpha\neq a =i^*\alpha$. 
It follows that $(i^*\alpha,(i^*+1)\alpha)$ is also an edge of $\vec{P}_n$ and so $a+1=i^*\alpha+1=(i^*+1)\alpha\in\im(\alpha)$, 
which is a contradiction. 

Thus, $\im(\alpha)$ must be an interval of $\{1,\ldots ,n\}$, as required.
\end{proof} 

Let us denote by $1_n$ the identity map of $\{1,\ldots ,n\}$.

\begin{theorem}
One has  $\End\vec{P}_{n}=\{1_{n}\}$. 
\end{theorem} 
\begin{proof} 
Obviously, $1_{n} \in$ End$\vec{P}_{n}$.

Let $\alpha \in$ End$\vec{P}_{n}$.

If $n=1$ then, trivially, $\alpha = 1_{n}$.

Let $n \geqslant 2$. For all $i \in \{1,\ldots ,n-1\}$, $(i,i+1)$ is an edge of $\vec{P}_{n}$. 
Therefore, for all $i \in \{1,\ldots ,n-1\}$, $(i \alpha, (i+1) \alpha)$ is also an edge of $\vec{P}_{n}$ (since $\alpha \in$ End$\vec{P}_{n}$), 
which implies that $(i+1) \alpha = i \alpha +1$, for all $i \in \{1,\ldots ,n-1\}$. 
Hence $2 \alpha = 1 \alpha +1, 3 \alpha = 2\alpha+1=1 \alpha +2,\ldots ,n \alpha= (n-1)\alpha+1=1 \alpha +n-1$. 
As $1\alpha\geqslant 1$ and $n\alpha\leqslant n$, we must have $1 \alpha =1$ and $n\alpha=n$. 
It follows that $2 \alpha =2, 3 \alpha =3,\ldots ,(n-1) \alpha =n-1$, whence $\alpha = 1_{n}$, as required. 
\end{proof} 

Since $\Aut\vec{P}_{n} \subseteq\sEnd\vec{P}_{n} \subseteq\End\vec{P}_{n}=\{1_{n}\}$, we also get 
$$
\Aut\vec{P}_{n} = \sEnd\vec{P}_{n} =\End\vec{P}_{n}=\{1_{n}\}.
$$
For the remaining submonoid $\swEnd\vec{P}_{n}$ of $\wEnd\vec{P}_{n}$, we have:  

\begin{theorem}
One has $\swEnd\vec{P}_{n}=\left \{
\begin{pmatrix} 
1 & \cdots & n \\ 
i & \cdots & i \\
\end{pmatrix}
\mid 1 \leqslant i \leqslant n \right \} \cup \{1_{n}\}$. 
\end{theorem}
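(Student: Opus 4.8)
The plan is to prove the two inclusions separately. For the inclusion $\supseteq$, I would verify the defining biconditional $(u,v)\in E \text{ and } u\alpha\neq v\alpha \iff (u\alpha,v\alpha)\in E$ directly on the two candidate families. For a constant map both sides are always false (the left side because $u\alpha=v\alpha$ for all $u,v$, the right side because $\vec{P}_n$ has no loops), so the biconditional holds vacuously. For $\alpha=1_n$ the left side reduces to ``$(u,v)\in E$ and $u\neq v$'', which is equivalent to $(u,v)\in E$ since $\vec{P}_n$ is loopless, and that is precisely the right side. Hence every constant map and $1_n$ belong to $\swEnd\vec{P}_n$.

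For the reverse inclusion I would take $\alpha\in\swEnd\vec{P}_n$, assume $\alpha$ is not constant, and show $\alpha=1_n$. Since $\swEnd\vec{P}_n\subseteq\wEnd\vec{P}_n$, Proposition~\ref{order-preserving} gives $\alpha\in\On$ and Lemma~\ref{interval} gives that $\im(\alpha)$ is an interval, say $\im(\alpha)=\{a,a+1,\ldots,b\}$ with $a<b$ because $|\im(\alpha)|\geqslant 2$. Writing $I_c=\{x\mid x\alpha=c\}$ for the preimage of each $c\in\im(\alpha)$, the decisive observation is the backward half of the biconditional: for any consecutive image values $c,c+1$ and any $u\in I_c$, $v\in I_{c+1}$, one has $(u\alpha,v\alpha)=(c,c+1)\in E$, so the strong-weak condition forces $(u,v)\in E$, i.e.\ $v=u+1$. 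Fixing one $v_0\in I_{c+1}$ pins down every element of $I_c$ as $v_0-1$, whence $|I_c|=1$; symmetrically $|I_{c+1}|=1$. Because $\im(\alpha)$ is an interval, every image value is an endpoint of some consecutive pair, so all preimages are singletons and $\alpha$ is injective. An injective self-map of a finite set is bijective, so $|\im(\alpha)|=n$; being an interval of size $n$ inside $\{1,\ldots,n\}$ forces $\im(\alpha)=\{1,\ldots,n\}$, and an order-preserving bijection of a finite chain is the identity. Thus $\alpha=1_n$, completing the inclusion $\subseteq$.

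I expect the only real subtlety to be exploiting the correct half of the strong-weak condition. The forward implication merely restates that $\alpha$ is a weak endomorphism (which, via Proposition~\ref{order-preserving}, yields order-preservation), whereas it is the backward implication $(u\alpha,v\alpha)\in E \Rightarrow (u,v)\in E$ that rigidifies the preimages into singletons. The one point requiring care is to guarantee that \emph{every} value of the image participates in such a consecutive pair, and this is exactly what the interval property supplied by Lemma~\ref{interval} provides; without it one could not rule out injectivity failing on an isolated image value.
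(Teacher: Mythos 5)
Your proof is correct, and while the easy inclusion $\supseteq$ is handled the same way in both (direct verification of the biconditional, which the paper dismisses as clear), your hard inclusion runs in the opposite direction from the paper's and on a different mechanism. You prove ``non-constant $\Rightarrow$ identity'': invoking Lemma~\ref{interval} to know $\im(\alpha)$ is an interval with at least two values, you apply the backward half of the biconditional to arbitrary elements of preimages of consecutive image values, forcing every kernel class to be a singleton; injectivity on a finite set then gives bijectivity, and order preservation (Proposition~\ref{order-preserving}) finishes. The paper instead proves ``non-identity $\Rightarrow$ constant'': since $\alpha\in\On$, a non-identity $\alpha$ has a first collapse index $i^*$ with $i^*\alpha=(i^*+1)\alpha$; the same backward implication applied to the pair $(i^*-1,i^*+1)$ shows $i^*=1$ (otherwise $(i^*-1,i^*+1)$ would have to be an edge of $\vec{P}_n$), and applied to $(k^*,k^*+2)$, where $k^*$ is the last index with $(1+k^*)\alpha=1\alpha$, shows the collapse propagates to $n$, so $\alpha$ is constant. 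Both arguments hinge on exactly the same rigidifying implication $(u\alpha,v\alpha)\in E\Rightarrow(u,v)\in E$, but yours is arguably more conceptual (singleton kernel classes, no index-chasing), at the cost of needing Lemma~\ref{interval}, whereas the paper's proof needs only order preservation and is self-contained modulo Proposition~\ref{order-preserving}. You are also right to flag that the interval property is what guarantees every image value participates in a consecutive pair; that is precisely where your argument would break without Lemma~\ref{interval}.
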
 
\begin{proof} 
Clearly, $1_{n} \in\swEnd\vec{P}_{n}$ and $\begin{pmatrix} 
1 & \cdots & n \\ 
i & \cdots & i \\
\end{pmatrix} \in\swEnd\vec{P}_{n}$, for all $i \in \{1,\ldots ,n\}$.

Let $\alpha \in\swEnd\vec{P}_{n}$.

If $n=1$ then, trivially, $\alpha = \begin{pmatrix} 1 \\ 1 \\ \end{pmatrix} = 1_{n}$.

Let $n \geqslant 2$. Then, for all $i \in \{1,\ldots ,n-1\}$, since $(i,i+1)$ is and edge of $\vec{P}_{n}$, 
we have $i \alpha = (i+1) \alpha$ or $(i \alpha, (i+1) \alpha)$ is an edge of $\vec{P}_{n}$ (i.e. $(i+1) \alpha = i \alpha +1$).

Suppose that $\alpha\neq 1_n$. Then, as $\alpha\in \On$, there exists $i\in\{1,\ldots,n-1\}$ such that $i\alpha=(i+1)\alpha$. 

Take $i^{*}= \min \big \{i \in \{1,\ldots ,n-1\}\mid i \alpha = (i+1)\alpha\big \}$. 
If $i^*\geqslant 2$ then $(i^*-1)\alpha\neq i^*\alpha$ and so $(i^*-1)\alpha =i^*\alpha-1=(i^*+1)\alpha-1$, 
whence $((i^*-1)\alpha,(i^*+1)\alpha)$  
is an edge of $\vec{P}_{n}$. It follows that $(i^*-1,i^*+1)$ must be an edge of $\vec{P}_{n}$, which is a contradiction. 
Thus $i^*=1$. 

Let $k^{*}= \max \big \{k \in \{1,\ldots ,n-1\}\mid (1+k)\alpha = 1\alpha\big \}$. 
Notice that $1\alpha=2\alpha=\cdots=(1+k^*)\alpha$. 
If $k^*<n-1$ then $(2+k^*)\alpha\neq (1+k^*)\alpha$ and so $(2+k^*)\alpha= (1+k^*)\alpha+1=k^*\alpha+1$,
whence  $(k^*\alpha,(2+k^*)\alpha)$ is an edge of $\vec{P}_{n}$. It follows that $(k^*, 2+k^*)$ must be an edge of $\vec{P}_{n}$, 
which is again a contradiction. Thus $k^*=n-1$. 

Therefore, $\alpha$ is a constant map, as required. 
\end{proof} 

Next, we give a characterization of the weak endomorphisms of $\vec{P}_{n}$. 

\begin{proposition}\label{char}
Let $\alpha\in\T_n$. Then, $\alpha \in\wEnd\vec{P}_{n}$ if and only if $\alpha \in\On$ and $\im(\alpha)$ is an interval of $\{1,\ldots ,n\}$.
\end{proposition}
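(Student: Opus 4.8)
The plan is to prove the biconditional in Proposition~\ref{char}. The forward direction is already done: if $\alpha\in\wEnd\vec{P}_n$, then Proposition~\ref{order-preserving} gives $\alpha\in\On$ and Lemma~\ref{interval} gives that $\im(\alpha)$ is an interval. So I would simply cite these two results and move on; the only real content lies in the converse.

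For the converse, I would assume $\alpha\in\On$ with $\im(\alpha)$ an interval of $\{1,\ldots,n\}$, and verify the defining condition of a weak endomorphism: whenever $(i,i+1)$ is an edge of $\vec{P}_n$ and $i\alpha\neq(i+1)\alpha$, I must show $((i\alpha,(i+1)\alpha)$ is an edge, i.e.\ $(i+1)\alpha=i\alpha+1$. Fix such an $i$. Since $\alpha$ is order-preserving and $i<i+1$ with $i\alpha\neq(i+1)\alpha$, we get the strict inequality $i\alpha<(i+1)\alpha$, so it suffices to rule out a gap, i.e.\ to show no value strictly between $i\alpha$ and $(i+1)\alpha$ can be avoided by $\im(\alpha)$. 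The key observation is that order-preservation forces $\im(\alpha)$ to be ``swept out'' consecutively: for any value $v$ with $i\alpha\leqslant v\leqslant(i+1)\alpha$, since $\im(\alpha)$ is an interval containing both $i\alpha$ and $(i+1)\alpha$, we have $v\in\im(\alpha)$, so $v=j\alpha$ for some $j$.

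The main step is then to argue that $i$ and $i+1$ are ``adjacent'' in the sense that no index $j$ has $i\alpha<j\alpha<(i+1)\alpha$; combining this with the interval property will force $(i+1)\alpha=i\alpha+1$. Concretely, suppose for contradiction that $(i+1)\alpha\geqslant i\alpha+2$. Then $i\alpha+1$ lies strictly between $i\alpha$ and $(i+1)\alpha$, and since $\im(\alpha)$ is an interval containing $i\alpha$ and $(i+1)\alpha$, we have $i\alpha+1\in\im(\alpha)$, say $i\alpha+1=j\alpha$. Because $\alpha\in\On$ and $i\alpha<j\alpha<(i+1)\alpha$, order-preservation forces $i<j<i+1$, which is impossible for integers. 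This contradiction yields $(i+1)\alpha=i\alpha+1$, as desired, completing the verification that $\alpha\in\wEnd\vec{P}_n$.

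I expect the main obstacle to be purely bookkeeping: making precise the interaction between order-preservation (which controls the \emph{direction} in which values increase) and the interval property (which controls the \emph{absence of gaps}), so that a gap of size $\geqslant 2$ between consecutive images is shown to contradict one of the two hypotheses. The argument above localizes this to a single contradiction, so no heavy machinery is needed; the only care required is in handling the strictness $i\alpha<(i+1)\alpha$ correctly, which follows immediately from $\alpha\in\On$ together with the assumption $i\alpha\neq(i+1)\alpha$.
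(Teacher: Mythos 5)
Your proposal is correct and follows essentially the same route as the paper: cite Proposition~\ref{order-preserving} and Lemma~\ref{interval} for the forward direction, and for the converse use the interval property to produce a preimage $j$ of $i\alpha+1$ and order-preservation to locate $j$ relative to $i$ and $i+1$. The only cosmetic difference is that you phrase the final step as a contradiction ($i<j<i+1$ is impossible), while the paper argues directly via the squeeze $(i+1)\alpha\leqslant j\alpha=i\alpha+1\leqslant(i+1)\alpha$.
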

\begin{proof}
The direct implication is an immediate consequence of Proposition \ref{order-preserving} and Lemma \ref{interval}.

Let us prove the converse implication. Let $\alpha \in \On $ and suppose that $\im(\alpha)$ is an interval of $\{1,\ldots ,n\}$. 
Let $i \in \{1,\ldots ,n-1\}$ be such that $i \alpha \neq (i+1) \alpha$. Since $\alpha \in \On $, it follows that $i \alpha < (i+1) \alpha$. 
Then $i \alpha +1 \leqslant (i+1) \alpha$.  
Since $\im(\alpha)$ is an interval of $\{1,\ldots ,n\}$, 
$i\alpha\leqslant i \alpha +1 \leqslant (i+1)\alpha$ and $i \alpha, (i+1) \alpha \in\im(\alpha)$, we deduce that $i \alpha +1 \in\im(\alpha)$. 
Then, there exists $j \in \{1,\ldots ,n\}$ such that $j \alpha = i \alpha +1$. 
Hence, we have $\alpha \in \On$ and $i\alpha<j\alpha$, from which follows that $i<j$, whence $i+1\leqslant j$ and so 
$(i+1)\alpha\leqslant  j\alpha=i\alpha+1\leqslant (i+1)\alpha$.  
Therefore, we obtain $(i+1) \alpha = i \alpha +1$, i.e. $(i \alpha, (i+1)\alpha)$ is an edge of $\vec{P}_{n}$, as required.
\end{proof} 

\begin{theorem}\label{card} 
One has $|\wEnd\vec{P}_{n}|=\displaystyle \sum_{k=1}^{n} (n-k+1) \begin{pmatrix} n-1 \\ k-1 \\ \end{pmatrix}$. 
\end{theorem}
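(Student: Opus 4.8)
The plan is to count the elements of $\wEnd\vec P_n$ directly, using the characterization provided by Proposition \ref{char}: a transformation $\alpha\in\T_n$ belongs to $\wEnd\vec P_n$ if and only if $\alpha\in\On$ and $\im(\alpha)$ is an interval of $\{1,\ldots,n\}$. I would stratify the count according to the size $k=|\im(\alpha)|$, which ranges over $\{1,\ldots,n\}$, so that $|\wEnd\vec P_n|=\sum_{k=1}^{n} N_k$, where $N_k$ is the number of $\alpha\in\wEnd\vec P_n$ with $|\im(\alpha)|=k$.

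For a fixed $k$, the computation of $N_k$ splits into two independent choices. First I would count the admissible images: since $\im(\alpha)$ is an interval of size $k$ contained in $\{1,\ldots,n\}$, it must be of the form $\{a,a+1,\ldots,a+k-1\}$ with $1\leqslant a\leqslant n-k+1$, giving exactly $n-k+1$ possibilities. Second, for each fixed interval $I$ of size $k$, I would count the maps $\alpha\in\On$ with $\im(\alpha)=I$, i.e. the order-preserving surjections from the chain $\{1,\ldots,n\}$ onto the $k$-element chain $I$. This is the genuinely combinatorial part of the argument.

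The key observation is that such an order-preserving surjection is completely determined by the family of preimages of the elements of $I$. Because $\alpha$ is order-preserving and surjective, these preimages are $k$ consecutive nonempty blocks that partition $\{1,\ldots,n\}$, and conversely any partition of the $n$-chain into $k$ consecutive nonempty blocks yields exactly one such surjection (send the $j$-th block to the $j$-th element of $I$). Thus these surjections are in bijection with the ways of cutting the chain $\{1,\ldots,n\}$ into $k$ consecutive nonempty blocks, which amounts to choosing $k-1$ of the $n-1$ gaps between consecutive elements as cut points; hence there are $\binom{n-1}{k-1}$ of them. Combining the two counts gives $N_k=(n-k+1)\binom{n-1}{k-1}$, and summing over $k$ from $1$ to $n$ yields the claimed formula.

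I expect the main (and essentially only) obstacle to be the careful justification of the bijection between order-preserving surjections onto a $k$-chain and partitions of $\{1,\ldots,n\}$ into $k$ consecutive nonempty blocks; in particular, one must verify that order-preservation together with surjectivity forces the blocks to be intervals and that the resulting assignment is well defined and invertible. Once this is established, the factorization $N_k=(n-k+1)\binom{n-1}{k-1}$ and the final summation are immediate.
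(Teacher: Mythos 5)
Your proposal is correct and follows essentially the same route as the paper: stratify by image size $k$, count the $n-k+1$ admissible interval images, and count the $\binom{n-1}{k-1}$ ways to partition the domain chain into $k$ consecutive nonempty blocks (the paper phrases this as counting ordered partitions of $n$ with $k$ parts, which is the same bijection you justify via choosing $k-1$ of the $n-1$ gaps). The only cosmetic difference is that you invoke Proposition \ref{char} and order-preservation to force the block structure, whereas the paper derives it directly from the weak endomorphism condition; these are equivalent by that same proposition.
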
 
\begin{proof}  
Let us begin by observing that $\wEnd\vec{P}_{n}$ can be written in the following manner:
$$
\wEnd\vec{P}_{n}=\displaystyle\bigcup_{k=1}^{n} J_{k},
$$
where $J_{k}=\{\alpha \in\wEnd\vec{P}_{n}\mid |\im(\alpha)|=k\}$, for all $k \in \{1,\ldots ,n\}$. 
Clearly, the sets $J_1,\ldots,J_{k}$ are disjoint, whence 
$$
|\wEnd\vec{P}_{n}|=\displaystyle\sum_{k=1}^{n}|J_{k}|.
$$

Let $k\in\{1,\ldots ,n\}$ and $\alpha \in J_{k}$. 
Then $\im(\alpha)$ is an interval of $\{1,\ldots ,n\}$ with $k$ elements and so $\im(\alpha)=\{j+1,\ldots ,j+k\}$, for some $j \in \{0,\ldots ,n-k\}$. 
Hence, the number of possibilities for $\im(\alpha)$ is $n-k+1$.

Since $\alpha \in\wEnd\vec{P}_{n}$, it follows that $(i+1)\alpha=i\alpha$ or $(i+1)\alpha=i\alpha+1$, for all $i \in \{1,\ldots ,n\}$. 
Then $\alpha$ has the form
$$
\alpha =
\left(\begin{array}{ccc}1&\cdots&i_{1}\\j+1&\cdots&j+1\\\end{array}\bigg|\begin{array}{ccc}i_{1}+1&\cdots&i_{1}+i_{2}\\j+2&\cdots&j+2\\\end{array}\bigg|\cdots\bigg|\begin{array}{ccc}i_{1}+\cdots+i_{k-1}+1&\cdots&i_{1}+\cdots+i_{k}\\j+k&\cdots&j+k\\\end{array}\right),
$$
for some $i_{1},\ldots ,i_{k}\in\{1,\ldots ,n\}$ such that $i_{1}+\cdots+i_{k}=n$. 
Observe that $i_{t}=\big|\big\{i\in\{1,\ldots ,n\}:i\alpha=j+t\big\}\big|$, for all $t\in\{1,\ldots ,k\}$. Therefore, 
we can associate the above transformation with the sequence $(i_{1},i_{2},\ldots ,i_{k})$, 
where $i_{1},\ldots ,i_{k}\in\{1,\ldots ,n\}$ and $i_{1}+\cdots+i_{k}=n$, i.e. with an \textit{ordered partition} of $n$ with $k$ elements.

It is known that the number of ordered partitions of $n$ with $k$ elements is given by 
$\begin{pmatrix} n-1 \\ k-1 \\ \end{pmatrix}$. 
Consequently, for each one of the possibilities for $\im(\alpha)$ there are $\begin{pmatrix} n-1 \\ k-1 \\ \end{pmatrix}$ 
possibilities for the transformation $\alpha$. Hence 
$$
\displaystyle |J_{k}|=(n-k+1)\begin{pmatrix} n-1 \\ k-1 \\ \end{pmatrix}
$$
and, therefore, 
$$
\displaystyle |\wEnd\vec{P}_{n}|= \sum_{k=1}^{n}(n-k+1)\begin{pmatrix} n-1 \\ k-1 \\ \end{pmatrix},
$$
as required. 
\end{proof} 

\medskip 

Recall that an element $s$ of a semigroup $S$ is called \textit{regular} if there exists $x \in S$ such that $s = sxs$. 
Moreover, a semigroup is said to be \textit{regular} if all its elements are regular.

\smallskip 

Now, we give a description of the regular elements of $\wEnd\vec{P}_{n}$. 

\begin{proposition}\label{regular} 
Let $\alpha\in\wEnd\vec{P}_{n}$. Then, the following properties are equivalent:
\begin{enumerate}
\item $\alpha$ is a regular element of $\wEnd\vec{P}_{n}$;
\item $|x\alpha^{-1}|>1$ implies $x\alpha^{-1}\cap\{1,n\}\neq\emptyset$, for all $x\in\im(\alpha)$;
\item $|x\alpha^{-1}|>1$ implies $x\in\{\min\im(\alpha),\max\im(\alpha)\}$, for all $x\in\im(\alpha)$. 
\end{enumerate}
\end{proposition}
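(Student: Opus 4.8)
The plan is to combine the description of $\wEnd\vec P_n$ supplied by Proposition \ref{char} with the standard criterion for regularity in a transformation monoid. Throughout, write $k=|\im(\alpha)|$, so that by Proposition \ref{char} we have $\im(\alpha)=\{j+1,\ldots,j+k\}$ for some $j\in\{0,\ldots,n-k\}$. Since $\alpha\in\On$, the fibres $(j+t)\alpha^{-1}$, for $t\in\{1,\ldots,k\}$, are nonempty intervals of $\{1,\ldots,n\}$ which partition $\{1,\ldots,n\}$ into $k$ consecutive blocks, the first containing $1$ and the last containing $n$; in particular $1\alpha=\min\im(\alpha)$ and $n\alpha=\max\im(\alpha)$. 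I would record this picture first, as everything else reads off from it.

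The equivalence of (2) and (3) is then immediate: for $x\in\im(\alpha)$ one has $1\in x\alpha^{-1}$ if and only if $x=1\alpha=\min\im(\alpha)$, and $n\in x\alpha^{-1}$ if and only if $x=n\alpha=\max\im(\alpha)$, so that $x\alpha^{-1}\cap\{1,n\}\neq\emptyset$ precisely when $x\in\{\min\im(\alpha),\max\im(\alpha)\}$. As the two implications in (2) and (3) have the same hypothesis and equivalent conclusions, they are equivalent, and it only remains to tie these to regularity.

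For that I would use the fact that $\alpha$ is regular in $\wEnd\vec P_n$ if and only if there is $\beta\in\wEnd\vec P_n$ with $\alpha\beta\alpha=\alpha$, which, evaluated on $\im(\alpha)$, amounts to requiring $y\beta\in y\alpha^{-1}$ for every $y\in\im(\alpha)$; that is, $\beta$ must select an element $b_t:=(j+t)\beta$ from the $t$-th block $(j+t)\alpha^{-1}$ for each $t$. To prove (1)$\Rightarrow$(3) I would argue by contraposition: assume some block $(j+t)\alpha^{-1}$ with $1<t<k$ has more than one element and suppose, for a contradiction, that such a $\beta$ exists. Because $\beta\in\On$, every vertex at most $j$ is sent by $\beta$ to a value $\leqslant b_1$ and every vertex at least $j+k+1$ to a value $\geqslant b_k$, while the vertices $j+1,\ldots,j+k$ produce exactly $b_1<b_2<\cdots<b_k$; hence the only values of $\beta$ lying between $b_1$ and $b_k$ are $b_1,\ldots,b_k$ themselves. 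Since $\im(\beta)$ is an interval containing $b_1$ and $b_k$, it must contain all of $b_1,b_1+1,\ldots,b_k$, forcing $b_{s+1}=b_s+1$ for every $s$. Feeding $b_{t-1}\in(j+t-1)\alpha^{-1}$ and $b_{t+1}\in(j+t+1)\alpha^{-1}$ back into these equalities pins $b_t$ simultaneously to $\min(j+t)\alpha^{-1}$ and $\max(j+t)\alpha^{-1}$, contradicting $|(j+t)\alpha^{-1}|>1$.

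For the converse (3)$\Rightarrow$(1) I would exhibit a witness. Assuming every interior block is a singleton, set $b_1=\max(j+1)\alpha^{-1}$, let $b_t$ be the unique element of $(j+t)\alpha^{-1}$ for $1<t<k$, and set $b_k=\min(j+k)\alpha^{-1}$; a short check shows $b_1,\ldots,b_k$ are then consecutive integers. Defining $\beta$ to be constantly $b_1$ on $\{1,\ldots,j+1\}$, to send $j+t$ to $b_t$, and to be constantly $b_k$ on $\{j+k,\ldots,n\}$ gives an order-preserving map whose image $\{b_1,\ldots,b_k\}$ is an interval, so $\beta\in\wEnd\vec P_n$ by Proposition \ref{char}, and by construction $\alpha\beta\alpha=\alpha$. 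I expect the main obstacle to be the implication (1)$\Rightarrow$(3): the crux is the observation that order-preservation confines the values of $\beta$ at vertices outside $\im(\alpha)$ to lie at or beyond the extreme selected values $b_1$ and $b_k$, so $\beta$ cannot fill in any gap strictly between consecutive selected values, which is exactly what forces the interior blocks to be singletons.
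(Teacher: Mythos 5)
Your proof is correct except for one small edge case, and for the key implication (1)$\Rightarrow$(3) it takes a genuinely different route from the paper's. Both proofs begin identically: $\alpha\beta\alpha=\alpha$ forces $y\beta\in y\alpha^{-1}$ for every $y\in\im(\alpha)$, so a witness $\beta$ must select one preimage $b_t$ from each fibre. From there the paper argues locally: for a fibre $x\alpha^{-1}$ with more than one element and $x$ interior, it uses $\alpha\beta\alpha=\alpha$ to rule out $x\beta=(x-1)\beta$ and $x\beta=(x+1)\beta$, so the weak-endomorphism condition on $\beta$ yields $(x-1)\beta=x\beta-1$ and $(x+1)\beta=x\beta+1$; applying $\alpha$ then shows $x\beta\in x\alpha^{-1}$ while $x\beta-1,\,x\beta+1\notin x\alpha^{-1}$, forcing the interval $x\alpha^{-1}$ to be the singleton $\{x\beta\}$, a contradiction. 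You argue globally instead: order-preservation of $\beta$ confines the values of $\beta$ at vertices outside $\im(\alpha)$ to lie at or beyond $b_1$ and $b_k$, and the interval property of $\im(\beta)$ --- Proposition \ref{char} applied to $\beta$, not only to $\alpha$ --- then forces $b_1,\ldots,b_k$ to be consecutive integers, which pins an interior $b_t$ to both endpoints of its fibre. Your route gives a sharper structural fact (any witness $\beta$ must enumerate consecutive preimages), which also makes the converse construction look inevitable rather than ad hoc; the paper's route is more local and self-contained, needing only three values of $\beta$ and never the interval property of its image. The witnesses built for (3)$\Rightarrow$(1) are essentially the same map in both proofs.

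The edge case: your construction for (3)$\Rightarrow$(1) is ill-defined when $k=|\im(\alpha)|=1$, i.e.\ when $\alpha$ is constant. Then the first and last fibres coincide, and your prescriptions $b_1=\max\big((j+1)\alpha^{-1}\big)=n$ and $b_k=\min\big((j+k)\alpha^{-1}\big)=1$ collide (here $b_1$ and $b_k$ are the same selected value), so the $\beta$ you describe is not a well-defined map for $n>1$. The gap is harmless --- a constant map is idempotent, hence regular --- but the case $k=1$ should be split off and the general construction stated only for $k\geqslant2$. The paper sidesteps this by writing $\alpha$ in a single normal form (constant, then strictly increasing, then constant) valid for every rank, including rank one, and obtaining $\beta$ by exchanging the two parameters of that form.
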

\begin{proof}
First, observe that, clearly, properties 2 and 3 are equivalent. 

We begin by supposing that $\alpha$ is a regular element of $\wEnd\vec{P}_{n}$. 
Take $\beta\in\wEnd\vec{P}_{n}$ such that $\alpha=\alpha\beta\alpha$. 
Let  $x\in\im(\alpha)$ be such that $|x\alpha^{-1}|>1$. 
Let $i\in\{1,\ldots,n\}$ and $k\geqslant1$ be such that $x\alpha^{-1}=\{i,i+1,\ldots,i+k\}$. 
By contradiction, admit that $x\not\in\{\min\im(\alpha),\max\im(\alpha)\}$. 
Then $1< i<i+k<n$, $(i-1)\alpha=x-1$ and $(i+k+1)\alpha=x+1$.  
If $x\beta=(x-1)\beta$ then 
$$
x=i\alpha=i\alpha\beta\alpha=x\beta\alpha=(x-1)\beta\alpha=(i-1)\alpha\beta\alpha=(i-1)\alpha=x-1,
$$ 
a contradiction. 
If $x\beta=(x+1)\beta$ then 
$$
x=i\alpha=i\alpha\beta\alpha=x\beta\alpha=(x+1)\beta\alpha=(i+k+1)\alpha\beta\alpha=(i+k+1)\alpha=x+1,
$$ 
a contradiction. 
Hence, we have $(x-1)\beta=x\beta-1$ and $(x+1)\beta=x\beta+1$. Thus,  
$$
(x\beta-1)\alpha=(x-1)\beta\alpha=(i-1)\alpha\beta\alpha=(i-1)\alpha=x-1,
$$ 
$$
x\beta\alpha=i\alpha\beta\alpha=i\alpha=x
$$ 
and 
$$
(x\beta+1)\alpha=(x+1)\beta\alpha=(i+k+1)\alpha\beta\alpha=(i+k+1)\alpha=x+1,
$$ 
whence 
$x\beta-1,x\beta+1\not\in x\alpha^{-1}$ and $x\beta\in x\alpha^{-1}$. It follows that $x\beta-1<i$ and $x\beta+1>i+k\geqslant i+1$, 
which implies $x\beta\leqslant i$ and $x\beta>i$, a contradiction. Therefore, we conclude that $x\in\{\min\im(\alpha),\max\im(\alpha)\}$. 

Conversely, suppose that $\alpha$ satisfies property 2 (or 3). Then $\alpha$ has the form
$$
\alpha=
\begin{pmatrix} 
1 & \cdots & i-1 & i & i+1& \cdots & i+k & i+k+1 & \cdots & n \\ 
j & \cdots & j & j & j+1 & \cdots &j+k&j+k & \cdots & j+k\\
\end{pmatrix},
$$
for some $k\geqslant0$ and $1\leqslant i,j\leqslant n-k$. Define 
$$
\beta=
\begin{pmatrix} 
1 & \cdots & j-1 & j & j+1& \cdots & j+k & j+k+1 & \cdots & n \\ 
i & \cdots & i & i & i+1 & \cdots &i+k&i+k & \cdots & i+k\\
\end{pmatrix}. 
$$
Then, clearly, $\beta\in\wEnd\vec{P}_{n}$ and $\alpha\beta\alpha=\alpha$, 
which shows that $\alpha$ is a regular element of $\wEnd\vec{P}_{n}$, 
as required. 
\end{proof}

Observe that, it is easy to check that, for $n\leqslant3$, all elements of $\wEnd\vec{P}_{n}$ verify property 2 (or 3) above. 
On the other hand, for $n\geqslant4$, 
$$
\begin{pmatrix} 
1 & 2 & 3 & 4 & \cdots & n \\ 
1 & 2 & 2 & 3 & \cdots & n-1\\
\end{pmatrix} 
$$
is an element of  $\wEnd\vec{P}_{n}$ which does not satisfy property 2 (or 3) above. 
Therefore, we have:

\begin{corollary}
The monoid $\wEnd\vec{P}_{n}$ is regular if and only if $n\leqslant3$. 
\end{corollary}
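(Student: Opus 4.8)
The plan is to reduce the statement to the element-wise characterization of regularity already furnished by Proposition~\ref{regular}. By definition, a semigroup is regular precisely when each of its elements is regular; hence $\wEnd\vec{P}_n$ is a regular monoid if and only if every $\alpha\in\wEnd\vec{P}_n$ satisfies property~2 (equivalently property~3) of Proposition~\ref{regular}. This converts the question into a purely combinatorial one about the fibres of the elements of $\wEnd\vec{P}_n$: we must decide for which $n$ it can happen that some $x\in\im(\alpha)$ with $|x\alpha^{-1}|>1$ is strictly interior to the interval $\im(\alpha)$, i.e. distinct from both $\min\im(\alpha)$ and $\max\im(\alpha)$.

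The cleanest way to pin down the threshold is to count. Suppose property~3 fails for some $\alpha$, witnessed by an interior point $x\in\im(\alpha)$ with $|x\alpha^{-1}|>1$. An interior point exists only if the interval $\im(\alpha)$ has at least three elements, so $|\im(\alpha)|\geqslant 3$; and since one fibre has at least two elements while the remaining fibres are nonempty, the domain satisfies $n=\sum_{y\in\im(\alpha)}|y\alpha^{-1}|\geqslant |\im(\alpha)|+1\geqslant 4$. Taking the contrapositive, if $n\leqslant 3$ then no element of $\wEnd\vec{P}_n$ can violate property~3, so every element is regular and the monoid $\wEnd\vec{P}_n$ is regular. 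This settles the ``if'' direction.

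For the ``only if'' direction, with $n\geqslant 4$, I would simply exhibit a single non-regular element, namely the transformation displayed immediately before the statement,
$$
\alpha=\begin{pmatrix} 1 & 2 & 3 & 4 & \cdots & n \\ 1 & 2 & 2 & 3 & \cdots & n-1\\ \end{pmatrix},
$$
which belongs to $\wEnd\vec{P}_n$ by Proposition~\ref{char}, being order-preserving with image equal to the interval $\{1,\ldots,n-1\}$. Here $2\in\im(\alpha)$ has $|2\alpha^{-1}|=|\{2,3\}|>1$, yet $2\notin\{\min\im(\alpha),\max\im(\alpha)\}=\{1,n-1\}$ because $n\geqslant 4$. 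By Proposition~\ref{regular}, this $\alpha$ is not a regular element, so $\wEnd\vec{P}_n$ is not a regular semigroup.

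I do not expect a serious obstacle here, since Proposition~\ref{regular} has already absorbed the essential difficulty. The only point demanding care is the bound for $n\leqslant 3$: one must argue rigorously that no order-preserving map of a chain with at most three elements admits a non-singleton fibre over an interior image point. The counting identity $n\geqslant |\im(\alpha)|+1$ above makes this transparent and simultaneously explains why the boundary occurs exactly at $n=4$, so the two directions dovetail into the stated equivalence.
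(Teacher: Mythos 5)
Your proposal is correct and follows essentially the same route as the paper: both reduce the corollary to the pointwise characterization in Proposition~\ref{regular} and both exhibit the identical witness $\begin{pmatrix} 1 & 2 & 3 & 4 & \cdots & n \\ 1 & 2 & 2 & 3 & \cdots & n-1 \end{pmatrix}$ of non-regularity for $n\geqslant 4$. The only difference is that where the paper dismisses the case $n\leqslant 3$ as ``easy to check'' (implicitly by inspecting the finitely many elements), you make it rigorous with the counting bound $n=\sum_{y\in\im(\alpha)}|y\alpha^{-1}|\geqslant|\im(\alpha)|+1\geqslant 4$ for any violator of property~3 --- a sharpening of the same argument, not a different one.
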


Recall that an element $\alpha\in\T_n$ is idempotent (i.e. $\alpha^2=\alpha$) if and only if it fixes all the elements of its image 
(i.e. $x\alpha=x$, for all $x\in\im(\alpha)$). Since an idempotent is a regular element, 
we deduce from Proposition \ref{regular} that an idempotent of $\wEnd\vec{P}_{n}$ has the form
$$
\alpha=
\begin{pmatrix} 
1 & \cdots & i-1 & i & i+1& \cdots & i+k & i+k+1 & \cdots & n \\ 
i & \cdots & i & i & i+1 & \cdots &i+k&i+k & \cdots & i+k\\
\end{pmatrix},
$$
for some $k\geqslant0$ and $1\leqslant i\leqslant n-k$, 
and so the number of idempotents of $\wEnd\vec{P}_{n}$ coincides with the number of non-empty intervals of $\{1,\ldots,n\}$, i.e. 
$\sum_{i=1}^{n}(n-i+1)=\frac{1}{2}(n+n^2)$. 

\begin{corollary}
The monoid $\wEnd\vec{P}_{n}$ has $\frac{1}{2}(n+n^2)$ idempotents. 
\end{corollary}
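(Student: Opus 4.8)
The plan is to count the idempotents by establishing a bijection between them and the non-empty intervals of $\{1,\ldots,n\}$, and then counting the latter. First I would recall the standard fact, noted just before the statement, that an element of $\T_n$ is idempotent precisely when it fixes every point of its image; this is what determines the values of an idempotent on its image once that image is known. Since every idempotent is in particular a regular element of $\wEnd\vec{P}_{n}$, I would then invoke Proposition \ref{regular}: any fiber $x\alpha^{-1}$ with more than one element forces $x\in\{\min\im(\alpha),\max\im(\alpha)\}$.

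Combining this with the fact that $\im(\alpha)$ is an interval (Lemma \ref{interval}, or Proposition \ref{char}), say $\im(\alpha)=\{i,\ldots,i+k\}$, and with order-preservation (Proposition \ref{order-preserving}), I would argue that the idempotent must take the canonical shape displayed immediately before the corollary: the points $1,\ldots,i-1$ all map to $i$, the points $i,\ldots,i+k$ are fixed, and the points $i+k+1,\ldots,n$ all map to $i+k$. The key point is that only the two extreme values of the image are allowed to have large fibers, so every interior value of the image has a singleton fiber, and order-preservation pins down where the points outside the image must go.

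Next I would observe that this data is entirely encoded by the interval $\im(\alpha)$: once the interval is chosen the transformation is uniquely reconstructed, and conversely each such transformation is readily checked to belong to $\wEnd\vec{P}_{n}$ (being order-preserving with interval image, by Proposition \ref{char}) and to be idempotent (fixing its image). Hence the assignment sending an idempotent to its image is a bijection between the set of idempotents of $\wEnd\vec{P}_{n}$ and the set of non-empty intervals of $\{1,\ldots,n\}$. The count is then routine: an interval is determined by its least element $i\in\{1,\ldots,n\}$ together with its greatest element, which ranges over $\{i,\ldots,n\}$, giving $n-i+1$ intervals with least element $i$, so that
$$
\sum_{i=1}^{n}(n-i+1)=\sum_{j=1}^{n}j=\frac{n(n+1)}{2}=\frac{1}{2}(n+n^2).
$$

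I do not foresee a genuine obstacle, since the structural work has already been carried out in Proposition \ref{regular} and in the discussion preceding the corollary; the result is really just a counting consequence. The only point demanding a little care is the boundary behaviour when $i=1$ (no points lie below the interval) or when $i+k=n$ (no points lie above it), which merely makes one of the two constant pieces empty and leaves the bijection intact.
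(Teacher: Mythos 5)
Your proposal is correct and follows essentially the same route as the paper: both use the fact that an idempotent fixes its image, invoke Proposition \ref{regular} (idempotents being regular) together with the interval image and order-preservation to pin down the canonical form, and then count idempotents via the bijection with non-empty intervals of $\{1,\ldots,n\}$, obtaining $\sum_{i=1}^{n}(n-i+1)=\frac{1}{2}(n+n^2)$.
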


The table below gives us an idea of the size of $\wEnd\vec P_n$ and of the number $\text{e}_n$ of idempotents in this monoid. 
\begin{center}
$\begin{array}{|c|c|c|}\cline{1-3}
n & |\wEnd\vec P_n|  & \text{e}_n\\ \cline{1-3}  
1 & 1& 1\\ \cline{1-3}
2 & 3 &  3 \\ \cline{1-3}
3 & 8 &  6 \\ \cline{1-3}
4 &  20     & 10  \\ \cline{1-3}
5  & 48      &  15 \\ \cline{1-3}
6  & 112    &  21 \\ \cline{1-3}
7  &  256   &  28 \\ \cline{1-3}
8  &   576  &  36 \\ \cline{1-3}
\end{array}$ \quad
$\begin{array}{|c|c|c|}\cline{1-3}
n &  |\wEnd\vec P_n| & \text{e}_n\\ \cline{1-3}
9  &  1280    &  45 \\ \cline{1-3}
10  &  2816   &  55 \\ \cline{1-3}
11 & 6144   &   66\\ \cline{1-3}
12 &  13312    &  78 \\ \cline{1-3}
13 &   28672   &  91 \\ \cline{1-3}
14 &   61440  &   105\\ \cline{1-3}
15 &   131072    &   120\\ \cline{1-3}
16 &   278528   &  136 \\ \cline{1-3}
\end{array}$
\end{center}

\section{The Rank of $\wEnd\vec P_n$} \label{rank} 

In this section we compute the rank of $\wEnd\vec P_n$.   
We proceed by determining a generating set of minimal size. 

\smallskip 

First, we observe that $\wEnd\vec P_1=\left\{\begin{pmatrix} 1 \\ 1 \\ \end{pmatrix}\right\}$ and 
$\wEnd\vec P_2=\left\{\begin{pmatrix} 1 &2 \\ 1 &1 \\ \end{pmatrix},\begin{pmatrix} 1 &2 \\ 1 &2 \\ \end{pmatrix},\begin{pmatrix} 1 &2 \\ 2 &2 \\ \end{pmatrix}\right\}$ and so, clearly, $\wEnd\vec P_1$ has rank $0$ and $\wEnd\vec P_2$ has rank $2$. 

\smallskip 

Let us consider $n\geqslant 3$ and define 
$$
\alpha_{i}=
\begin{pmatrix} 
1 & \cdots & i & i+1 & \cdots & n \\ 
1 & \cdots & i & i & \cdots & n-1\\
\end{pmatrix}
\quad\text{and}\quad
\beta_{i}=
\begin{pmatrix} 
1 & \cdots & i & i+1 & \cdots & n \\ 
2 & \cdots & i+1 & i+1 & \cdots & n\\
\end{pmatrix},
$$
for $i=1,\ldots ,n-1$. It is clear that $\alpha_{1},\ldots ,\alpha_{n-1},\beta_{1},\ldots ,\beta_{n-1}\in \wEnd\vec P_n$. 

\begin{lemma} \label{generators 1}
Let $n\geqslant 3$. Then  $\{\alpha \in\wEnd\vec{P}_{n}\mid |\im(\alpha)| = n-1\} \subseteq \langle \alpha_{1},\ldots ,\alpha_{n-2},\beta_{n-1} \rangle$. 
\end{lemma}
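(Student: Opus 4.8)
The plan is to first pin down exactly which transformations have image of size $n-1$, and then to realize each of them as a short product of the chosen generators. By Proposition \ref{char}, any $\alpha$ with $|\im(\alpha)|=n-1$ is order-preserving with image an interval of size $n-1$; since $\{1,\dots,n\}$ possesses only the two intervals $\{1,\dots,n-1\}$ and $\{2,\dots,n\}$ of that size, and since an order-preserving surjection from $\{1,\dots,n\}$ onto such an interval has all its fibres equal to intervals (by convexity), exactly one fibre must have size $2$ while the others are singletons. I expect to argue that this size-$2$ fibre is necessarily a pair of consecutive points $\{i,i+1\}$, and that this pair together with the choice of image completely determines $\alpha$. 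Carrying this out should give
$$
\{\alpha\in\wEnd\vec{P}_n \mid |\im(\alpha)|=n-1\}=\{\alpha_1,\dots,\alpha_{n-1},\beta_1,\dots,\beta_{n-1}\},
$$
so that it suffices to show each $\alpha_i$ and each $\beta_i$ lies in $\langle\alpha_1,\dots,\alpha_{n-2},\beta_{n-1}\rangle$.

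Among these, $\alpha_1,\dots,\alpha_{n-2}$ and $\beta_{n-1}$ are already generators, so only $\alpha_{n-1}$ and $\beta_1,\dots,\beta_{n-2}$ remain to be treated. The key observation is that $\im(\alpha_i)=\{1,\dots,n-1\}$ and that on $\{1,\dots,n-1\}$ the map $\beta_{n-1}$ acts simply as $x\mapsto x+1$. Hence I would compute $\alpha_i\beta_{n-1}$ directly: applying $\alpha_i$ and then adding $1$ sends $x\mapsto x+1$ for $x\leqslant i$ and $x\mapsto x$ for $x>i$, which is precisely $\beta_i$. This yields $\beta_i=\alpha_i\beta_{n-1}$ for every $i\in\{1,\dots,n-1\}$; in particular $\beta_1,\dots,\beta_{n-2}$ all lie in the submonoid generated by $\alpha_1,\dots,\alpha_{n-2},\beta_{n-1}$.

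For the missing $\alpha_{n-1}$ I would instead multiply in the other order and use $\alpha_1$. Since $\beta_{n-1}$ sends $x\mapsto x+1$ for $x\leqslant n-1$ and fixes $n$, while $\alpha_1$ fixes $1$ and sends $x\mapsto x-1$ for $x\geqslant 2$, the composite $\beta_{n-1}\alpha_1$ returns every $x\leqslant n-1$ to itself and sends $n\mapsto n-1$, i.e. $\beta_{n-1}\alpha_1=\alpha_{n-1}$. As $n\geqslant 3$ guarantees that $\alpha_1$ is among the generators, this places $\alpha_{n-1}$ in the required submonoid and completes the containment. The only genuinely delicate point is the first step, namely verifying that the list $\alpha_i,\beta_i$ misses no transformation of image size $n-1$; the two product identities are then routine once one notes how $\beta_{n-1}$ and $\alpha_1$ act as shifts on the relevant intervals.
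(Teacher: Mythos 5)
Your proposal is correct and takes essentially the same route as the paper: both first identify the set of rank-$(n-1)$ elements as exactly $\{\alpha_1,\ldots,\alpha_{n-1},\beta_1,\ldots,\beta_{n-1}\}$ (the paper via the weak-endomorphism condition $(i+1)\alpha\in\{i\alpha,i\alpha+1\}$, you via convexity of fibres of order-preserving maps --- equivalent arguments), and then both use precisely the identities $\beta_i=\alpha_i\beta_{n-1}$ and $\alpha_{n-1}=\beta_{n-1}\alpha_1$ to conclude.
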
 
\begin{proof} 
Let us begin by proving that $\{\alpha \in\wEnd\vec{P}_{n}\mid|\im(\alpha)|= n-1\} = \{ \alpha_{1},\ldots ,\alpha_{n-1},\beta_{1},\ldots ,\beta_{n-1}\}$.

Let $\alpha \in\wEnd\vec{P}_{n}$ be such that $|\im(\alpha)|=n-1$. By Proposition \ref{char}, $\im(\alpha)$ is an interval of $\{1,\ldots ,n\}$, whence 
$\im(\alpha)=\{1,..,n-1\}$ or $\im(\alpha)=\{2,..,n\}$.
Since $(i+1) \alpha = i \alpha$ or $(i \alpha, (i+1)\alpha)$ is an edge of $\vec{P}_{n}$ (i.e. $(i+1) \alpha = i \alpha +1$), for all $i \in \{1,\ldots ,n-1\}$, and 
$|\im(\alpha)|=n-1$, there exists a unique $j \in \{1,\ldots ,n-1\}$ such that $(j+1) \alpha = j \alpha$ and, 
for all $i \in \{1,\ldots ,n-1\}\setminus \{j\}$, $(i+1) \alpha = i \alpha +1$. Consequently, $\alpha = \alpha_{j}$, if $\im(\alpha)=\{1,..,n-1\}$, 
and $\alpha = \beta_{j}$, if $\im(\alpha)=\{2,..,n\}$.

Now, in order to prove the desired inclusion, it is enough to observe that $\alpha_{n-1}=\beta_{n-1}\alpha_{1}$ and $\beta_{i}=\alpha_{i}\beta_{n-1}$, 
for all $i \in \{1,\ldots ,n-2\}$. 
\end{proof}

\begin{lemma} \label{generators 2}
Let $n\geqslant 3$ and let $\alpha \in\wEnd\vec{P}_{n}$ be such that $|\im(\alpha)|=k$, for some $1 \leqslant k \leqslant n-2$. 
Then, there exist $\gamma_{1},\gamma_{2} \in\wEnd\vec{P}_{n}$ such that $\alpha=\gamma_{1}\gamma_{2}$ and $|\im(\gamma_{i})|=k+1$, $i =1,2$.
\end{lemma}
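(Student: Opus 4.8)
The plan is to realise $\alpha$ as a \emph{split--then--remerge} composition, relying on the description of weak endomorphisms given by Proposition \ref{char}: a transformation lies in $\wEnd\vec{P}_n$ exactly when it is order-preserving and its image is an interval. By that proposition together with the block description used in the proof of Theorem \ref{card}, $\alpha$ collapses $\{1,\dots,n\}$ onto the interval $\im(\alpha)=\{j+1,\dots,j+k\}$ through $k$ consecutive blocks $B_1<\cdots<B_k$, with $B_t\alpha=\{j+t\}$ for each $t$. Since $k\leqslant n-2$ we have $\sum_{t}|B_t|=n\geqslant k+2$, so at least one block $B_s$ contains two consecutive points and can be split; moreover $\im(\alpha)$ is a \emph{proper} subinterval of $\{1,\dots,n\}$, hence it admits a neighbouring value, namely $j$ (when $j\geqslant1$) or $j+k+1$ (when $j+k\leqslant n-1$), and at least one of these two situations occurs.

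First I would build $\gamma_1$ by splitting the block $B_s$ into two non-empty consecutive sub-blocks and leaving all the other blocks intact, sending the resulting $k+1$ consecutive blocks onto an interval of length $k+1$ in the natural increasing fashion. Being constant with strictly increasing values along consecutive blocks, $\gamma_1$ is order-preserving with interval image, so $\gamma_1\in\wEnd\vec{P}_n$ and $|\im(\gamma_1)|=k+1$, by Proposition \ref{char}. I would then define $\gamma_2$ on $\im(\gamma_1)$ so as to remerge the two images of the split sub-blocks back to the single value $j+s$ and to reproduce the remaining values of $\alpha$; because $\gamma_1$'s partition of the domain refines that of $\alpha$ by exactly one extra cut, this prescription is well defined and a direct check gives $\gamma_1\gamma_2=\alpha$. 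The freedom in choosing which length-$(k+1)$ interval plays the role of $\im(\gamma_1)$ then lets me place the ``unused'' domain points of $\gamma_2$ either below or above, and I use them to introduce precisely one extra image value at the available neighbour of $\im(\alpha)$.

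The delicate point --- and the very reason the statement is restricted to $k\leqslant n-2$ rather than $k\leqslant n-1$ --- is to guarantee that $\gamma_2$ has image of size exactly $k+1$ and not merely $k$. This is where I would split into two cases according to which end $\im(\alpha)$ misses. If $j+k\leqslant n-1$, I take $\im(\gamma_1)=\{1,\dots,k+1\}$ and let the tail $\{k+2,\dots,n\}$, which is non-empty precisely because $k\leqslant n-2$, be sent by $\gamma_2$ to the new value $j+k+1\leqslant n$. If instead $j+k=n$, then $j=n-k\geqslant2$, so I take $\im(\gamma_1)=\{n-k,\dots,n\}$ and let the initial segment $\{1,\dots,n-k-1\}$, again non-empty because $k\leqslant n-2$, be sent by $\gamma_2$ to the new value $j=n-k\geqslant1$. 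In either case $\gamma_2$ is order-preserving with an interval image of size $k+1$, hence $\gamma_2\in\wEnd\vec{P}_n$ by Proposition \ref{char}, while the added points lie outside $\im(\gamma_1)$ and so do not affect the product. The remaining verifications, namely order-preservation across the joins and the identity $\gamma_1\gamma_2=\alpha$, are then routine.
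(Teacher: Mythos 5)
Your proposal is correct and takes essentially the same approach as the paper's proof: split one kernel class of $\alpha$ to form $\gamma_1$ mapping onto an interval of size $k+1$, then remerge via $\gamma_2$, using the leftover domain points of $\gamma_2$ (non-empty precisely because $k\leqslant n-2$) to pick up one extra image value adjacent to $\im(\alpha)$, with the same case split according to whether $\im(\alpha)$ reaches the top of the chain. The differences are only cosmetic: the paper treats $k=1$ separately, fixes the split block as the last one of size at least $2$, and in the case $j=n-k$ places $\im(\gamma_1)=\{2,\ldots,k+2\}$ rather than at the top as you do.
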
 
\begin{proof} 
Let $\alpha \in\wEnd\vec{P}_{n}$ be such that $|\im(\alpha)|=k$, for some $k \in \{1,\ldots ,n-2\}$.

Since $\im(\alpha)$ is an interval of $\{1,\ldots ,n\}$ we have $\im(\alpha)=\{j+1,\ldots ,j+k\}$, for some $j \in \{0,\ldots ,n-k\}$. On the other hand, 
since $\alpha \in \On$, the transformation $\alpha$ has the form
$$
\alpha =
\left(\begin{array}{ccc}1&\cdots&i_{1}\\j+1&\cdots&j+1\\\end{array}\bigg|\begin{array}{ccc}i_{1}+1&\cdots&i_{1}+i_{2}\\j+2&\cdots&j+2\\\end{array}\bigg|\cdots\bigg|\begin{array}{ccc}i_{1}+\cdots+i_{k-1}+1&\cdots&i_{1}+\cdots+i_{k}\\j+k&\cdots&j+k\\\end{array}\right),
$$
for some $i_{1},\ldots ,i_{k}\in\{1,\ldots ,n\}$ such that $i_{1}+\cdots+i_{k}=n$. 

\smallskip

We will consider several cases. 

\smallskip 

Case 1: $k=1$. 

If $j=n-k=n-1$ then
$$
\alpha=\left(\begin{array}{ccc}1&\cdots&n\\n&\cdots&n\\\end{array}\right).
$$
By taking  
$$
\gamma_{1}=\left(\begin{array}{cccc}1&\cdots&n-1&n\\2&\cdots&2&3\\\end{array}\right)
\quad\text{and}\quad 
\gamma_{2}=\left(\begin{array}{cccc}1&2&\cdots&n\\n-1&n&\cdots&n\\\end{array}\right),
$$
clearly, we have $\gamma_{1},\gamma_{2} \in\wEnd\vec{P}_{n}$, $\alpha=\gamma_{1}\gamma_{2}$ 
and $|\im(\gamma_{1})|=|\im(\gamma_{2})|=k+1=2$.

If $0 \leqslant j \leqslant n-k-1=n-2$ then
$$
\alpha=\left(\begin{array}{ccc}1&\cdots&n\\j+1&\cdots&j+1\\\end{array}\right).
$$
In this case, let us define 
$$
\gamma_{1}=\left(\begin{array}{cccc}1&\cdots&n-1&n\\1&\cdots&1&2\\\end{array}\right)
\quad\text{and}\quad 
\gamma_{2}=\left(\begin{array}{ccccc}1&2&3&\cdots&n\\j+1&j+1&j+2&\cdots&j+2\\\end{array}\right).
$$
So, it is also clear that $\gamma_{1},\gamma_{2} \in\wEnd\vec{P}_{n}$, 
$\alpha=\gamma_{1}\gamma_{2}$ and $|\im(\gamma_{1})|=|\im(\gamma_{2})|=k+1=2$.

\smallskip 

Case 2: $2 \leqslant k \leqslant n-2$.

Let $p=$ max $ \big\{t\in \{1,\ldots ,k\}\mid i_{t} \geqslant 2 \big\}$.

If $j=n-k$ then 
$$
\begin{array}{lll}
\alpha =
\left(\begin{array}{ccc}1&\cdots&i_{1}\\n-k+1&\cdots&n-k+1\\\end{array}\bigg|\cdots\bigg|\begin{array}{ccc}i_{1}+\cdots+i_{p-1}+1&\cdots&i_{1}+\cdots+i_{p}\\n-k+p&\cdots&n-k+p\\\end{array}\right|\cdots\\\\
\textrm{\hspace{8.8 cm}}\cdots\left|\begin{array}{ccc}i_{1}+\cdots+i_{k-1}+1&\cdots&n\\n&\cdots&n\\\end{array}\right).\end{array}
$$
Take
$$
\begin{array}{lll}
\gamma_{1}=\left(\begin{array}{ccc}1&\cdots&i_{1}\\2&\cdots&2\\\end{array}\bigg|\begin{array}{ccc}i_{1}+1&\cdots&i_{1}+i_{2}\\3&\cdots&3\\\end{array}\bigg|\cdots\bigg|\begin{array}{ccc}i_{1}+\cdots+i_{p-1}+1&\cdots&i_{1}+\dots+i_{p}-1\\p+1&\cdots&p+1\\\end{array}\right|\\\\\textrm{\hspace{6.5 cm}}\left|\begin{array}{c}i_{1}+\cdots+i_{p}\\p+2\\\end{array}\bigg|\cdots\bigg|\begin{array}{ccc}i_{1}+\cdots+i_{k-1}+1&\cdots&n\\k+2&\cdots&k+2\\\end{array}\right)\end{array}
$$
and
$$
\gamma_{2}=\left(\begin{array}{ccccccccc}1&2&\cdots&p+1&p+2&\cdots&k+2&\cdots&n\\n-k&n-k+1&\cdots&n-k+p&n-k+p&\cdots&n&\cdots&n\\\end{array}\right).
$$
Then, it is easy to check that $\gamma_{1},\gamma_{2} \in\wEnd\vec{P}_{n}$, $\alpha=\gamma_{1}\gamma_{2}$ 
and $|\im(\gamma_{1})|=|\im(\gamma_{2})|=k+1$. 

Finally, if $0 \leqslant j \leqslant n-k-1$ then 
$$
\begin{array}{lll}
\alpha =
\left(\begin{array}{ccc}1&\cdots&i_{1}\\j+1&\cdots&j+1\\\end{array}\bigg|\cdots\bigg|\begin{array}{ccc}i_{1}+\cdots+i_{p-1}+1&\cdots&i_{1}+\cdots+i_{p}\\j+p&\cdots&j+p\\\end{array}\right|\cdots\\\\
\textrm{\hspace{6.5 cm}}\cdots\left|\begin{array}{ccc}i_{1}+\cdots+i_{k-1}+1&\cdots&n\\j+k&\cdots&j+k\\\end{array}\right).\end{array}
$$
and by taking 
$$
\begin{array}{lll}
\gamma_{1}=\left(\begin{array}{ccc}1&\cdots&i_{1}\\1&\cdots&1\\\end{array}\bigg|\begin{array}{ccc}i_{1}+1&\cdots&i_{1}+i_{2}\\2&\cdots&2\\\end{array}\bigg|\cdots\bigg|\begin{array}{ccc}i_{1}+\cdots+i_{p-1}+1&\cdots&i_{1}+\cdots+i_{p}-1\\p&\cdots&p\\\end{array}\right|\\\\\textrm{\hspace{6.5 cm}}\left|\begin{array}{c}i_{1}+\cdots+i_{p}\\p+1\\\end{array}\bigg|\cdots\bigg|\begin{array}{ccc}i_{1}+\cdots+i_{k-1}+1&\cdots&n\\k+1&\cdots&k+1\\\end{array}\right)\end{array}
$$
and
$$
\gamma_{2}=\left(\begin{array}{ccccccccc}1&\cdots&p&p+1&\cdots&k+1&k+2&\cdots&n\\j+1&\cdots&j+p&j+p&\cdots&j+k&j+k+1&\cdots&j+k+1\\\end{array}\right), 
$$
we obtain $\gamma_{1},\gamma_{2} \in\wEnd\vec{P}_{n}$, $\alpha=\gamma_{1}\gamma_{2}$ and 
$|\im(\gamma_{1})|=|\im(\gamma_{2})|=k+1$, as required. 
\end{proof}

Now, we deduce that $\{ \alpha_{1},\ldots ,\alpha_{n-2},\beta_{n-1} \}$ is a generating set of $\wEnd\vec{P}_{n}$. 

\begin{proposition} 
Let $n\geqslant 3$. Then $\wEnd\vec{P}_{n} = \langle \alpha_{1},\ldots ,\alpha_{n-2},\beta_{n-1} \rangle$.
\end{proposition}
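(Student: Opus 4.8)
The plan is to prove the two inclusions separately. Since each of $\alpha_1,\ldots,\alpha_{n-2},\beta_{n-1}$ lies in $\wEnd\vec{P}_n$, the containment $\langle\alpha_1,\ldots,\alpha_{n-2},\beta_{n-1}\rangle\subseteq\wEnd\vec{P}_n$ is immediate, so the entire content is the reverse inclusion. To organize it, I would stratify $\wEnd\vec{P}_n$ by image size, writing $\wEnd\vec{P}_n=\bigcup_{k=1}^n J_k$ with $J_k=\{\alpha\in\wEnd\vec{P}_n\mid |\im(\alpha)|=k\}$ as in the proof of Theorem \ref{card}, and then prove $J_k\subseteq G$ for every $k$ by \emph{downward} induction on $k$, where I abbreviate $G=\langle\alpha_1,\ldots,\alpha_{n-2},\beta_{n-1}\rangle$.

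For the base of the induction I would treat the two largest values of $k$. When $k=n$, any $\alpha\in J_n$ is a surjective transformation of a finite set, hence a bijection; since $\wEnd\vec{P}_n\subseteq\On$ by Proposition \ref{order-preserving}, a bijective order-preserving map of $\{1,\ldots,n\}$ must be the identity, so $J_n=\{1_n\}$, and $1_n\in G$ as the identity (empty product) of the monoid $G$. When $k=n-1$, the inclusion $J_{n-1}\subseteq G$ is precisely Lemma \ref{generators 1}.

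For the inductive step, fix $k$ with $1\leqslant k\leqslant n-2$ and assume $J_{k+1}\subseteq G$. Given any $\alpha\in J_k$, Lemma \ref{generators 2} produces $\gamma_1,\gamma_2\in\wEnd\vec{P}_n$ with $\alpha=\gamma_1\gamma_2$ and $|\im(\gamma_1)|=|\im(\gamma_2)|=k+1$, that is, $\gamma_1,\gamma_2\in J_{k+1}$. By the induction hypothesis $\gamma_1,\gamma_2\in G$, whence $\alpha=\gamma_1\gamma_2\in G$. This yields $J_k\subseteq G$ and completes the induction, so $\wEnd\vec{P}_n=\bigcup_{k=1}^n J_k\subseteq G$, which together with the trivial inclusion gives the desired equality.

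The substantive work has already been carried out in Lemmas \ref{generators 1} and \ref{generators 2}: the first identifies the transformations of image size $n-1$ and shows they are generated by the chosen set, while the second supplies the ``image-raising'' factorization $\alpha=\gamma_1\gamma_2$ that drives the induction. Consequently I expect no genuine obstacle in the proposition itself; the only point demanding a little care is the top stratum $k=n$, where one must observe that surjectivity together with order-preservation forces the identity, so that $1_n$ is accounted for without requiring a generator of full image size.
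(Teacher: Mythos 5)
Your proposal is correct and follows essentially the same route as the paper: the paper also combines Lemma \ref{generators 1} (the stratum of rank $n-1$) with Lemma \ref{generators 2} (factoring lower-rank elements through higher rank) to conclude generation, merely phrasing the downward induction informally. Your explicit treatment of the top stratum $k=n$ (surjective plus order-preserving forces $1_n$, which is the empty product in the monoid) is a detail the paper leaves implicit, but it is the same argument.
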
 
\begin{proof} 
In Lemma \ref{generators 1} we have seen that all transformations of $\wEnd\vec{P}_{n}$ of rank $n-1$ can be written as the product of elements of the set $\{\alpha_{1},\ldots ,\alpha_{n-2},\beta_{n-1}\}$. On the other hand, Lemma \ref{generators 2} says that all elements of $\wEnd\vec{P}_{n}$ of rank between $1$ and $n-2$ can be written as the product of elements of higher rank. This means that all elements of $\wEnd\vec{P}_{n}$ of rank between $1$ and $n-1$ can be written as the product of elements of the set $\{\alpha_{1},\ldots ,\alpha_{n-2},\beta_{n-1}\}$. Then $\{\alpha_{1},\ldots ,\alpha_{n-2},\beta_{n-1}\}$ is a generating set of $\wEnd\vec{P}_{n}$.
\end{proof} 

In addition, we will show below that $\{ \alpha_{1},\ldots ,\alpha_{n-2},\beta_{n-1} \}$ is a generating set of $\wEnd\vec{P}_{n}$ of minimal size. 

\smallskip

Let $\alpha\in\T_n$. We denote by $\rank\alpha$ the \textit{rank} of $\alpha$ and by $\Ker(\alpha)$ the \textit{kernel} of $\alpha$, i.e. 
$\rank\alpha=|\im(\alpha)|$ and $\Ker(\alpha)=\{(x,y)\in\{1,\ldots,n\}\times\{1,\ldots,n\}\mid x\alpha=y\alpha\}$. 

\begin{lemma} 
Let $M$ be a submonoid of $\T_n$ such that $1_n$ is the unique element of rank $n$. Let $X$ be a set of generators of $M$. 
Then $X$ has at least one element with each of the possible kernels of elements of rank $n-1$. 
\end{lemma}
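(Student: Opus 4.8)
The plan is to exploit two standard monotonicity properties of rank and kernel under composition in $\T_n$, together with the hypothesis that $1_n$ is the only element of rank $n$ in $M$. Throughout I write maps on the right, so that $x(\sigma\tau)=(x\sigma)\tau$, consistent with the earlier sections. Fix a kernel $K$ that occurs as $\Ker(\alpha)$ for some $\alpha\in M$ with $\rank\alpha=n-1$; the goal is to produce an element of $X$ whose kernel is exactly $K$. First I would recall the two basic inequalities: for any $\sigma,\tau\in\T_n$ one has $\rank(\sigma\tau)\leqslant\min\{\rank\sigma,\rank\tau\}$, and $\Ker(\sigma)\subseteq\Ker(\sigma\tau)$ (if $\sigma$ identifies two points, so does $\sigma\tau$). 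Since $X$ generates $M$ and $\alpha\neq 1_n$, I can write $\alpha=x_1x_2\cdots x_m$ with $x_1,\ldots,x_m\in X$; deleting any factors equal to $1_n$, I may assume every $x_i\neq 1_n$, and then the hypothesis forces $\rank(x_i)\leqslant n-1$ for each $i$.

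Next I would pin down all the ranks occurring in this product. From $\rank(\sigma\tau)\leqslant\rank\sigma$ an easy induction gives $\rank(x_1\cdots x_i)\leqslant\rank(x_1)\leqslant n-1$ for every $i$; conversely, writing $\alpha=(x_1\cdots x_i)(x_{i+1}\cdots x_m)$ shows $n-1=\rank(\alpha)\leqslant\rank(x_1\cdots x_i)$. Hence every partial product $x_1\cdots x_i$, and in particular every single generator $x_i$, has rank exactly $n-1$. Since the number of classes of $\Ker(\sigma)$ equals $\rank(\sigma)$, each of the equivalence relations in the chain
$$
\Ker(x_1)\subseteq\Ker(x_1x_2)\subseteq\cdots\subseteq\Ker(x_1\cdots x_m)=\Ker(\alpha)
$$
partitions $\{1,\ldots,n\}$ into exactly $n-1$ classes.

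The final step is the observation that closes the argument: if $P\subseteq Q$ are equivalence relations on a finite set with the same number of classes, then $P=Q$, because a strictly coarser relation must have strictly fewer classes. Applying this along the chain above, all the kernels coincide, so in particular $\Ker(x_1)=\Ker(\alpha)=K$, and $x_1\in X$ is the required generator. I expect no genuine obstacle here: the argument is essentially forced once the monotonicity facts are in hand. The only point demanding care is the bookkeeping that reduces $\alpha$ to a product of generators all of rank exactly $n-1$, and this is precisely where the hypothesis that $1_n$ is the unique element of rank $n$ is needed — it is what rules out an inflating factor and guarantees each $x_i$ has rank at most $n-1$.
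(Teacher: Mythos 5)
Your proposal is correct and follows essentially the same route as the paper's proof: decompose $\alpha$ into generators with identity factors removed, use $\rank(\sigma\tau)\leqslant\min\{\rank\sigma,\rank\tau\}$ together with the uniqueness of $1_n$ in rank $n$ to force the first generator to have rank $n-1$, and then combine $\Ker(x_1)\subseteq\Ker(\alpha)$ with equality of ranks to conclude $\Ker(x_1)=\Ker(\alpha)$. Your chain of partial products and the explicit remark that nested equivalence relations with the same number of classes coincide merely spell out the paper's final step in more detail.
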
 
\begin{proof}
Let $\alpha \in M$ be such that rank $\alpha =n-1$.

Since $X$ is a set of generators of $S$, there exist $t \in \mathbb{N}$ and $\gamma_{1},\ldots , \gamma_{t} \in X$ such that $\alpha=\gamma_{1} \cdots \gamma_{t}$ (we may assume that $\gamma_{1},\ldots ,\gamma_{t} \neq 1_n$).

Clearly, $\Ker(\gamma_{1}) \subseteq\Ker(\alpha)$. 
On the other hand, it is easy to check that $\rank(\gamma_{1} \cdots \gamma_{t} )\leqslant\rank\gamma_{i}$, for all $i \in \{1,\ldots ,t\}$ and so, in particular, 
$\rank \alpha \leqslant\rank\gamma_{1}$. Consequently, we have that rank $\gamma_{1}=n-1$ (since $\rank \alpha = n-1$, $\gamma_{1} \neq 1_n$ and $1_n$ is the unique element of $M$ of rank $n$).

Now, from $\Ker(\gamma_{1}) \subseteq\Ker(\alpha)$ and $\rank\gamma_{1}=\rank\alpha$, it follows that $\Ker(\gamma_{1}) =\Ker(\alpha)$. 
Therefore, the set of generators $X$ has at least one element with each of the possible kernels of elements of rank $n-1$, as required. 
\end{proof} 

Finally, since the  transformations $\alpha_{1},\ldots ,\alpha_{n-2},\beta_{n-1}$ have all, clearly, distinct kernels, 
we conclude that $\{ \alpha_{1},\ldots ,\alpha_{n-2},\beta_{n-1} \}$ is a generating set of $\wEnd\vec{P}_{n}$ of minimal size. 
Thus, we may deduce our main result: 

\begin{theorem} 
For $n\geqslant 3$, the rank of the monoid $\wEnd\vec{P}_{n}$ is $n-1$.
\end{theorem}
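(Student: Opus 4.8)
The plan is to establish matching upper and lower bounds on $\rank(\wEnd\vec{P}_n)$, both of which are essentially already in hand from the preceding results. For the upper bound I would simply invoke the proposition proved just above, which shows that the $(n-1)$-element set $\{\alpha_1,\ldots,\alpha_{n-2},\beta_{n-1}\}$ generates $\wEnd\vec{P}_n$; this immediately yields $\rank(\wEnd\vec{P}_n)\leqslant n-1$.

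The substance lies in the lower bound, and here I would argue that every generating set must have at least $n-1$ elements by means of the kernel lemma proved immediately before this theorem. First I would verify its hypothesis: since $\wEnd\vec{P}_n\subseteq\On$ by Proposition \ref{char}, any transformation of rank $n$ is an order-preserving bijection of $\{1,\ldots,n\}$ and hence the identity, so $1_n$ is indeed the unique element of rank $n$. This legitimizes applying the lemma, which then tells us that any generating set $X$ of $\wEnd\vec{P}_n$ contains at least one element realizing each kernel that occurs among the rank-$(n-1)$ elements.

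It remains to count these kernels. By Lemma \ref{generators 1} the elements of rank $n-1$ are exactly $\alpha_1,\ldots,\alpha_{n-1},\beta_1,\ldots,\beta_{n-1}$, and a direct inspection of their displayed forms shows that both $\alpha_i$ and $\beta_i$ collapse precisely the adjacent pair $\{i,i+1\}$ while separating all remaining points; thus $\Ker(\alpha_i)=\Ker(\beta_i)$ and there are exactly $n-1$ distinct such kernels, one for each $i\in\{1,\ldots,n-1\}$. Since distinct kernels force distinct generators, the lemma gives $|X|\geqslant n-1$ for every generating set $X$, whence $\rank(\wEnd\vec{P}_n)\geqslant n-1$. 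Combining the two bounds delivers the claimed equality.

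I do not expect a genuine obstacle, since this theorem is the capstone assembling the earlier pieces rather than a fresh argument. The only point demanding a little care is the bookkeeping confirming that the exhibited generators already realize all $n-1$ kernels, namely the fusions $\{1,2\},\ldots,\{n-2,n-1\}$ via $\alpha_1,\ldots,\alpha_{n-2}$ and the fusion $\{n-1,n\}$ via $\beta_{n-1}$; this is exactly what certifies that the generating set is of minimal size.
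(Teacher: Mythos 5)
Your proposal is correct and follows essentially the same route as the paper: the upper bound comes from the proposition that $\{\alpha_1,\ldots,\alpha_{n-2},\beta_{n-1}\}$ generates $\wEnd\vec{P}_n$, and the lower bound from the kernel lemma together with the observation that these $n-1$ generators realize $n-1$ pairwise distinct kernels of rank-$(n-1)$ elements. If anything, you are slightly more careful than the paper, which leaves implicit both the verification that $1_n$ is the unique rank-$n$ element of $\wEnd\vec{P}_n\subseteq\On$ and the identification of the rank-$(n-1)$ elements as $\alpha_1,\ldots,\alpha_{n-1},\beta_1,\ldots,\beta_{n-1}$ with exactly the $n-1$ kernels collapsing adjacent pairs $\{i,i+1\}$.
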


%%%%%% 

{\small \sf  
\noindent{\sc V\'\i tor H. Fernandes}, 
CMA, Departamento de Matem\'atica, 
Faculdade de Ci\^encias e Tecnologia, 
Universidade NOVA de Lisboa, 
Monte da Caparica, 
2829-516 Caparica, 
Portugal; 
e-mail: vhf@fct.unl.pt. 

\medskip 

\noindent{\sc T\^ania Paulista}, 
Departamento de Matem\'atica, 
Faculdade de Ci\^encias e Tecnologia, 
Universidade NOVA de Lisboa, 
Monte da Caparica, 
2829-516 Caparica, 
Portugal; 
e-mail: t.paulista@campus.fct.unl.pt. 
} 

\end{document}